\newcommand{\arxiv}[1]{\href{http://arxiv.org/abs/#1}{{\tt arXiv:#1}}}
\def\theequation{\thesection.\@arabic \c@equation}
\def\@citecolor{blue}
\def\@linkcolor{blue}
\def\@urlcolor{blue}
\def\theenumi{\@alph\c@enumi}
\theoremstyle{plain}
\newtheorem{theorem}[equation]{Theorem}
\newtheorem{lemma}[equation]{Lemma}
\newtheorem{proposition}[equation]{Proposition}
\theoremstyle{definition}
\newtheorem{remark}[equation]{Remark}
\newenvironment{remarkbox}[1][]{%
    \begin{remark}[#1] \pushQED{\qed}}{\popQED \end{remark}}
\newtheorem{example}[equation]{Example}
\newtheorem{definition}[equation]{Definition}
\newenvironment{definitionbox}[1][]{%
    \begin{definition}[#1]\pushQED{\qed}}{\popQED \end{definition}}
\newtheorem{notation}[equation]{Notation}
\newtheorem{discussion}[equation]{Discussion}
\newenvironment{discussionbox}[1][]{%
    \begin{discussion}[#1]\pushQED{\qed}}{\popQED \end{discussion}}
\newtheorem{construction}[equation]{Construction}
\DeclareMathOperator{\calF}{\mathcal F}
\newcommand{\frakm}{{\mathfrak m}}
 \let\strSh\calO
\newcommand{\bbV}{\mathbb{V}}
\newcommand{\ints}{\mathbb{Z}}
\newcommand{\rationals}{\mathbb{Q}}
\newcommand{\BettiCone}{\mathbb{B}}%_{\rationals}}
\DeclareMathOperator{\pdim}{pdim}
\DeclareMathOperator{\rank}{rank}
\DeclareMathOperator{\projective}{\mathbb{P}}
\DeclareMathOperator{\Tor}{Tor}
\DeclareMathOperator{\homology}{H}
\newcommand{\define}[1]{{\bf #1}}
\DeclareMathOperator{\image}{image}
\DeclareMathOperator{\depth}{depth}
\begin{document}
\title{The cone of Betti tables over a rational normal curve}

\author{Manoj Kummini}
\address{Chennai Mathematical Institute, Siruseri, Tamilnadu 603103, India}
\email{mkummini@cmi.ac.in}

\author{Steven V Sam}
\address{Department of Mathematics, University of California, Berkeley,
USA}
\email{svs@math.berkeley.edu}

\date{October 29, 2013}

\thanks
{MK was partially supported by a CMI Faculty Development Grant; SVS was
supported by a Miller research fellowship. Additionally, both authors
thank Mathematical Sciences Research Institute, Berkeley, where they were
members during Fall 2012 when part of this work was done.}

\keywords{Graded Betti numbers, rational normal curve}
\subjclass[2010]{13D02; 13C05}

\begin{abstract}
We describe the cone of Betti tables of Cohen--Macaulay modules over the
homogeneous coordinate ring of a rational normal curve. 
\end{abstract}

\maketitle

\section{Introduction}

The study of the cone generated by the graded Betti tables of finitely
generated modules over graded rings has received much attention recently.
(See Definition~\ref{definitionbox:BScone} for the relevant definitions.)
This began with a conjectural description of this cone in the case of 
polynomial rings by M.~Boij and J.~S\"oderberg~\cite{boijsoderberg} 
which was proved by D.~Eisenbud and F.-O.~Schreyer~\cite{es:bs}. 
We refer to \cite{es:survey, floystad} for a survey of this development and
related results. Similarly, in the local case, 
there is a description of the cone of Betti sequences over regular local
rings~\cite{BEKS}.

However, not much is known about the cone of Betti tables over other graded
rings, or over non-regular local rings.  The cone of Betti tables for rings
of the form $\Bbbk[x,y] / q(x,y)$ where $q$ is a homogeneous quadric is
described in \cite{bbeg}. In the local hypersurface case,~\cite{BEKS} gives
some partial results and some asymptotic results. We also point to \cite[\S\S 9, 10]{eisenbuderman} for a study of Betti tables in the non-regular case.

In this paper, we consider the coordinate ring of a rational normal curve.
These rings are of finite Cohen--Macaulay representation type, and the
syzygies of maximal Cohen--Macaulay modules have a simple description; see
Discussion~\ref{discussionbox:mcmRNC}. 
Our main result is Theorem~\ref{thm:betticoneB}, describing the cone
generated by finite-length modules over such a ring.
Remark~\ref{remarkbox:depthOne} explains how the argument
extends to Cohen--Macaulay modules of higher depth.
We work out a few explicit examples of our result in \S\ref{sec:example} for the rational normal cubic.
In Remark~\ref{remarkbox:totalBetti}, we consider the cone generated by
sequences of total Betti numbers, and get a picture reminiscent of the case
of regular local rings from~\cite{BEKS}.

We thank Jerzy Weyman for helpful discussions and Daniel Erman for suggestions which significantly improved an earlier draft.

\section{Preliminaries}

Let $\Bbbk$ be a field, which we fix for the rest of the article.

\begin{definitionbox}
\label{definitionbox:BScone}
Let $R$ be any Noetherian graded $\Bbbk$-algebra. For a finitely generated
$R$-module $M$, define its \define{graded Betti numbers} 
$\beta_{i,j}^R(M) := \dim_\Bbbk \Tor_i^R(\Bbbk,M)_j$. 
Let $t = \pdim(R) + 1$ (possibly $t = \infty$). 
The \define{Betti table} of $M$ is 
\[
\beta^R(M) := 
\left( \beta^R_{i,j}(M)\right)_{ \substack{0 \leq i < t, \\ j \in \ints}},
\]
which is an element of the $\rationals$-vector space
\[
\bbV_R := \prod_{0 \leq i < t} \bigoplus_{j \in \ints} \rationals.
\] 
The \define{cone of Betti tables} over $R$ is the cone 
$\BettiCone(R)$ generated by the rays $\rationals_{\ge 0}\cdot\beta^R(M)$
in $\bbV_R$.
\end{definitionbox}

Let $S = \Bbbk[x,y]$. Fix $d \geq 1$. Let $B
= \bigoplus_n S_{nd} \subset S$, i.e., the homogeneous coordinate ring of
the rational normal curve of degree $d$.  
For a coherent sheaf $\calF$ on 
$\projective^1$, define 
\[
\Gamma_*^{(d)}  (\calF) = \bigoplus_{j \in \ints} \homology^0(\projective^1, \calF \otimes \strSh(dj)).
\]
We set $\Gamma_* = \Gamma_*^{(1)}$.
Also, for a finitely generated $B$-module $M$, let $\widetilde{M}$ be the
associated coherent sheaf on $\projective^1$. There is an exact sequence
\[
0 \to \homology^0_{\frakm}(M) \to M \to \Gamma^{(d)}_\ast(\widetilde{M}) \to
\homology^1_{\frakm}(M) \to 0
\]
where $\homology^i_{\frakm}$ denotes local cohomology with respect to the
homogeneous maximal ideal $\frakm \subset B$ \cite[Theorem 13.21]{localcoh}
and hence the map $M \to \Gamma^{(d)}_\ast(\widetilde{M})$ is an isomorphism if
(and only if) 
$M$ is a maximal Cohen--Macaulay module by \cite[Theorem 9.1]{localcoh}.

\begin{discussionbox}[Maximal Cohen--Macaulay modules over $B$]
\label{discussionbox:mcmRNC}
Ignoring the grading for a moment, the indecomposable maximal Cohen--Macaulay $B$-modules are exactly the modules 
\[
M^{(\ell)} := \bigoplus_{n \ge 0} S_{nd+\ell}, \text{ for } \ell=0, \ldots, d-1. 
\]
To see this, let $M$ be a maximal Cohen--Macaulay $B$-module. Then
$\widetilde{M}$ is a vector bundle on $\projective^1$, and by
Grothendieck's theorem, every vector bundle on $\projective^1$ is a direct
sum of line bundles. Note that $\Gamma^{(d)}_\ast(\strSh(i)) = M^{(\ell)}$ if $i
\equiv \ell \pmod d$ and $0 \le \ell < d$. Since
$\Gamma^{(d)}_\ast(\widetilde{M}) \cong M$, we conclude that $M$ is a direct sum
of the $M^{(\ell)}$ for various $\ell$.

For each $0 \le \ell \le d-1$, consider the exact sequence
\[
0 \to \strSh_{\projective^1}(-1)^\ell \to \homology^0(\projective^1, \strSh_{\projective^1}(\ell)) \otimes \strSh_{\projective^1} \to \strSh_{\projective^1}(\ell) \to 0.
\]
Applying $\Gamma^{(d)}_\ast$ to this sequence, we conclude that $M^{(\ell)}$ is minimally generated by $\ell+1$ homogeneous elements of the same degree, and that for $1 \leq \ell \leq d-1$, the first syzygy module of $M^{(\ell)}$ is $(M^{(d-1)}(-1))^{\ell}$. Iterating this remark gives a linear minimal free resolution for $M^{(\ell)}$ over $B$.
\end{discussionbox}

\section{Pure resolutions}

\begin{definitionbox}
We say that a finite length $B$-module $M$ has a \define{pure resolution}
if there is a minimal exact sequence of the form
\[
0 \to E_2 \to F_1 \to F_0 \to M \to 0,
\]
where each $F_i$ is generated in a single degree $d_i$, the modules $F_0,
F_1$ are free, and $E_2 = M^{(\ell)}(-d_2)^{\oplus r}$ for some $\ell$ and
$r$. In this case, we call $(d_0, d_1, d_2; \ell)$ the degree sequence of
$M$.

We remark that $\ell=0$ means that the module has finite projective dimension.
\end{definitionbox}

\begin{proposition}
\label{proposition:pure}
If $M$ has a pure resolution of type $(d_0, d_1, d_2; \ell)$, then its
Betti numbers are determined up to scalar multiple. In particular, they are
determined by the first $3$ Betti numbers $(\beta_0, \beta_1, \beta_2)$,
which is a multiple of
\[
\beta^B(d_0,d_1,d_2;\ell) = (d(d_2-d_1) - \ell, d(d_2-d_0)-\ell,
d(d_1-d_0)(\ell+1)).
\]
The other Betti numbers satisfy 
\[
\beta_i = (d-1)^{i-3} \beta_2 \frac{d\ell}{\ell+1}, \quad (i \ge 3).
\]
\end{proposition}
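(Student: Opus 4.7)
The plan is to handle the two assertions separately. For the tail $\beta_i$ with $i \geq 3$, I would iterate the syzygy description from Discussion~\ref{discussionbox:mcmRNC}. Setting $r := \beta_2/(\ell+1)$, we have $E_2 = M^{(\ell)}(-d_2)^r$ by hypothesis, and its first syzygy is $(M^{(d-1)}(-d_2-1))^{r\ell}$, which is minimally generated by $r \ell d$ elements; hence $\beta_3 = r \ell d = \beta_2 \cdot d\ell/(\ell+1)$. Because the first syzygy of $M^{(d-1)}$ is again $(M^{(d-1)}(-1))^{d-1}$, the number of minimal generators is multiplied by $(d-1)$ at every subsequent step, producing $\beta_i = (d-1)^{i-3}\beta_2 \cdot d\ell/(\ell+1)$ for all $i \geq 3$.

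For $(\beta_0, \beta_1, \beta_2)$ I would pass to Hilbert series. From $\dim_\Bbbk B_n = nd+1$ and $\dim_\Bbbk (M^{(\ell)})_n = nd + \ell + 1$ one computes
\[
\mathrm{HS}(B;t) = \frac{1+(d-1)t}{(1-t)^2}, \qquad \mathrm{HS}(M^{(\ell)};t) = \frac{(\ell+1)+(d-\ell-1)t}{(1-t)^2}.
\]
The defining four-term exact sequence then yields
\[
(1-t)^2\,\mathrm{HS}(M;t) = \beta_0 t^{d_0}\bigl(1+(d-1)t\bigr) - \beta_1 t^{d_1}\bigl(1+(d-1)t\bigr) + r\,t^{d_2}\bigl((\ell+1)+(d-\ell-1)t\bigr).
\]
Since $M$ has finite length, $\mathrm{HS}(M;t)$ is a polynomial, so the right-hand side must itself be divisible by $(1-t)^2$. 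Setting its value and first derivative at $t=1$ equal to zero gives two linear relations among the three quantities $\beta_0, \beta_1, r$, which pins down the ratio uniquely. A direct computation identifies these relations with the claimed triple $\bigl(d(d_2-d_1)-\ell,\ d(d_2-d_0)-\ell,\ d(d_1-d_0)(\ell+1)\bigr)$ after rescaling, with $\beta_2 = r(\ell+1)$ built in.

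The main obstacle is computational rather than conceptual. Vanishing at $t=1$ collapses immediately to $\beta_0 - \beta_1 + r = 0$, but the derivative condition requires a careful expansion of the $(d_0,d_1,d_2)$-dependent terms; after simplification it becomes $\beta_0\, d(d_1-d_0) = r\bigl(d(d_2-d_1) - \ell\bigr)$, which combined with $\beta_1 = \beta_0 + r$ recovers the three-term formula. The tail formula derived in the first paragraph then serves as an internal consistency check via the Euler characteristic of the full infinite resolution.
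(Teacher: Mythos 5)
Your proof is correct and follows essentially the same route as the paper: iterate the syzygy description of Discussion~\ref{discussionbox:mcmRNC} for the tail $\beta_i$ ($i\ge 3$), and for $(\beta_0,\beta_1,\beta_2)$ exploit the fact that the finite-length condition forces the numerator of the Hilbert series to vanish to order two at $t=1$. The only cosmetic difference is that you plug in the closed form of $\mathrm{HS}(M^{(\ell)};t)$ directly into the four-term sequence, whereas the paper resolves $M^{(\ell)}$ to a free $B$-resolution and sums the resulting geometric series, and your explicit introduction of $r=\beta_2/(\ell+1)$ cleanly resolves a notational overload in the paper's displayed sequence.
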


\begin{proof}
The Hilbert series of $B$ is $H_B(t) = \frac{1+(d-1)t}{(1-t)^2}$. Suppose
that $M$ is a finite length module with pure resolution of type $(d_0, d_1,
d_2; \ell)$. By definition, we have an exact sequence of the form
\[
0 \to M^{(\ell)}(-d_2)^{\beta_2} \to B(-d_1)^{\beta_1} \to B(-d_0)^{\beta_0} \to M \to 0
\]
for some $(\beta_0, \beta_1, \beta_2)$. By Discussion~\ref{discussionbox:mcmRNC}, $M^{(\ell)}$ has a resolution of the form
\[
\cdots \to B(-3)^{d(d-1)^2\ell} \to B(-2)^{d(d-1)\ell} \to B(-1)^{d\ell} \to B^{\ell+1} \to M^{(\ell)} \to 0,
\]
so $M$ has a free resolution of the form
\[
\cdots \to B(-d_4)^{\beta_4} \to B(-d_2-1)^{\beta_2 d \ell / (\ell+1)} \to
B(-d_2)^{\beta_2} \to B(-d_1)^{\beta_1} \to B(-d_0)^{\beta_0}.
\]
For $i>3$, we have $d_i = d_{i-1}+1 = d_2 + (i-2)$ and $\beta_i =
(d-1)\beta_{i-1} = (d-1)^{i-3} \beta_2 d \ell / (\ell+1)$. Taking the
alternating sum, we get
\begin{align*}
H_M(t) &= \beta_0 t^{d_0} H_B(t) - \beta_1 t^{d_1} H_B(t) + \beta_2 t^{d_2}
H_B(t) + \beta_2 \frac{d\ell}{\ell+1} t^{d_2} H_B(t) \sum_{i \ge 3} (-1)^i
(d-1)^{i-3} t^{i-2}\\
&= \frac{(\beta_0 t^{d_0} - \beta_1 t^{d_1} + \beta_2
t^{d_2})(1+(d-1)t)}{(1-t)^2} - \beta_2 \frac{d\ell}{\ell+1} t^{d_2+1}
\frac{1 + (d-1)t}{(1-t)^2} \frac{1}{1 - (1-d)t}\\
&= \frac{(\beta_0 t^{d_0} - \beta_1 t^{d_1} + \beta_2 t^{d_2})(1+(d-1)t) -
\frac{d\ell}{\ell+1} \beta_2 t^{d_2+1}}{(1-t)^2}.
\end{align*}
Since $H_M(t)$ is a polynomial, the numerator $h(t)$ of the last expression
is divisible by $(1-t)^2$. This translates to $h(1) = h'(1) = 0$ (where
$h'$ is the derivative with respect to $t$), which gives two linearly
independent conditions on $(\beta_0, \beta_1, \beta_2)$ since $d_0 \ne d_1$ and $d \ne 0$:
\begin{align*}
\begin{pmatrix} d & -d & \frac{d}{\ell + 1} \\
d_0 + (d_0+1)(d-1) & - d_1 - (d_1+1)(d-1) & d_2 + (d_2+1)(\frac{d}{\ell+1} -1 )\end{pmatrix} 
\begin{pmatrix} \beta_0 \\ \beta_1 \\ \beta_2 \end{pmatrix} = 0.
\end{align*}
So $(\beta_0, \beta_1, \beta_2)$ is determined up
to simultaneous scalar multiple, and it is straightforward to check that
$\beta^B(d_0,d_1,d_2;\ell)$ is a valid solution.
\end{proof}

Since it will be used later, we record a relation amongst these pure Betti
tables
\begin{align} \label{eqn:pureredundancy}
\beta^B(d_0, d_1, d_2; \ell) = \left(1-\frac{\ell}{d-1}\right) \beta^B(d_0,
d_1, d_2; 0) + \frac{\ell}{d-1} \beta^B(d_0, d_1, d_2; d-1).
\end{align}
This relation extends to all of the Betti numbers since the later Betti
numbers are multiples of $\beta_2$.

\section{Main result}
\label{section:mainresult}

\begin{theorem} \label{thm:betticoneB}
The extremal rays of the subcone of $\BettiCone(B)$ generated by the Betti
tables of finite length modules are spanned by Betti tables of modules with pure resolutions of type $(d_0, d_1, d_2; \ell)$ where $d_0 < d_1 < d_2$ and $\ell = 0$ or $\ell = d-1$.
\end{theorem}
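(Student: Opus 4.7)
The plan is to adapt the Boij--S\"oderberg/Eisenbud--Schreyer decomposition strategy. There are two things to show: (A) every Betti table $\beta^B(M)$ of a finite length $B$-module decomposes as a positive rational combination of the pure tables of the types listed, and (B) each such pure table spans an extremal ray of the subcone.

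For (A), I would run a greedy algorithm on $\beta^B(M)$. From the table I extract a minimal degree sequence $(d_0, d_1, d_2; \ell)$, where $d_i$ is the smallest degree in homological position $i$ that supports a nonzero entry, and $\ell$ is the smallest type of indecomposable MCM summand $M^{(\ell)}(-d_2)$ appearing in the second syzygy module $E_2$; recall from Discussion~\ref{discussionbox:mcmRNC} that $E_2$ is MCM and so decomposes uniquely into such summands. Using Proposition~\ref{proposition:pure}, I subtract the largest positive rational multiple $c \cdot \beta^B(d_0, d_1, d_2; \ell)$ for which $\beta^B(M)$ remains entrywise nonnegative, and iterate on the remainder; termination follows from Noetherian induction on a suitable partial order on degree sequences. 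Having decomposed $\beta^B(M)$ into pure tables with arbitrary $\ell$, the relation \eqref{eqn:pureredundancy} then rewrites each pure table with $0 < \ell < d-1$ as a positive combination of the $\ell = 0$ and $\ell = d-1$ tables sharing the same $(d_0, d_1, d_2)$.

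For (B), I would construct, for each allowed pure table, a linear functional on $\bbV_B$ that is nonnegative on the subcone and vanishes uniquely on the ray through that table. Such functionals can be built from the Hilbert-series constraints $h(1) = h'(1) = 0$ used in the proof of Proposition~\ref{proposition:pure}, combined with \emph{strand-selecting} functionals that distinguish the finite support of the $\ell = 0$ tables (which terminate at homological position $2$) from the infinite geometric tail of the $\ell = d-1$ tables. Once (A) is available, the cone is spanned by the listed pure tables, and this linear-independence argument prevents any further decomposition.

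I expect the main obstacle to be the positivity step in the greedy algorithm: verifying that, after subtracting the maximal admissible multiple of the chosen pure table, the remainder still lies in the cone rather than merely being entrywise nonnegative. In the polynomial ring setting this step rests on pairings with cohomology tables of vector bundles on $\projective^n$. Here I would aim to build analogous pairings from cohomology of sheaves on $\projective^1$, invoking the local cohomology sequence displayed just before Discussion~\ref{discussionbox:mcmRNC} to convert numerical invariants of $\widetilde{M}$ into constraints on $\beta^B(M)$; the very small dimension of $\projective^1$ and the complete classification of its vector bundles should make this markedly simpler than the Eisenbud--Schreyer case.
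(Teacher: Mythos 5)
Your approach is genuinely different from the paper's, and as you write it has an unfilled gap at its heart. You propose to redo the Eisenbud--Schreyer program over $B$ from scratch: greedy decomposition for the spanning claim (A) plus a facet functional construction for extremality (B), with the positivity step in the greedy algorithm to be handled by cohomology-table pairings on $\projective^1$. That last ingredient is precisely what you have not supplied, and you correctly flag it as ``the main obstacle.'' Without it, the greedy algorithm only tells you that after subtracting the first pure table the remainder is entrywise nonnegative, not that it is still a positive combination of Betti tables; so part (A) is incomplete, and part (B) (the facet functionals) is likewise a sketch rather than a proof.

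There is also a more specific difficulty with your greedy step. You take $\ell$ to be the smallest $\ell$ with $M^{(\ell)}(-d_2)$ a summand of $E_2$. But this information is not recoverable from $\beta^B(M)$ when $d \geq 3$: only the two combinations $\sum_\ell(\ell+1)b_{\ell,j}$ and $\sum_\ell \ell b_{\ell,j}$ are visible in the table (via $\beta_{2,j}$ and $\beta_{3,j+1}$), not the individual $b_{\ell,j}$. This is exactly the point of Example~\ref{eg:crude} in the paper. And after one subtraction you no longer have a module at all, so ``the summand of $E_2$'' ceases to make sense. One way out is to restrict to $\ell \in \{0, d-1\}$ from the start using \eqref{eqn:pureredundancy}, but then the choice of $\ell$ at each greedy step must still be justified by a positivity argument that you have not built.

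The paper avoids all of this by refusing to redo Boij--S\"oderberg theory over $B$. Instead it defines a transfer $M \mapsto M'$ to a finite length module over $S = \Bbbk[x,y]$ (sheafify the tail of the resolution, apply $\Gamma_*$), packages this into a map $\phi \colon \bbV_B \to \bbV_S/{\sim}$ for a suitable equivalence relation, and shows in Lemma~\ref{lem:bsRNC} that $\phi$ is injective on the relevant subcone of $\BettiCone(B)$ with explicitly identified image. All decomposition and extremality statements are then pulled back from the known Eisenbud--Schreyer theorem over $S$. The greedy algorithm over $B$ that you are trying to establish is recovered \emph{afterward} as a corollary in Remark~\ref{rmk:simplicial}, not used as the proof mechanism. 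In short: your strategy is the one a more direct analogue of Eisenbud--Schreyer would require, but it leaves the hardest step (positivity via dual functionals over $B$) unproved, whereas the paper sidesteps that step entirely by embedding into the polynomial ring case.
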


The proof will be given at the end of the section. The idea is to embed
this cone as a certain quotient cone of $\BettiCone(S)$ and to deduce the
result from \cite{es:bs}.

Let $M$ be a finite length $B$-module. Let $(F_\bullet, \partial_\bullet)$ be a minimal graded $B$-free resolution of $M$; then $F_i = \bigoplus_j B(-j)^{\beta_{i,j}^B(M)}$. Consider the exact sequences 
\begin{align*}
0 \to \image \partial_2 \to F_1 \to \image \partial_1 \to 0, \qquad 0 \to \image \partial_1 \to F_0 \to M \to 0.
\end{align*}
Using \cite[Corollary 18.6]{eisenbud}, we conclude that $\depth(\image \partial_i) = i$ for $i=1,2$, so $\image \partial_2$ is a maximal Cohen--Macaulay $B$-module. By Discussion~\ref{discussionbox:mcmRNC}, we may write
\begin{align*}
\image \partial_2 & = \bigoplus_{\ell=0}^{d-1} \bigoplus_{j \in \ints}
(M^{(\ell)}(-j))^{b_{\ell,j}(M)}, 
\end{align*}
for some integers $b_{\ell,j}(M)$. Hence
\begin{align} \label{eqn:imagepartial3}
\image \partial_3 & =  \bigoplus_{j \in \ints}
(M^{(d-1)}(-j-1))^{s_j} \text{ where } s_j = \sum_{\ell=0}^{d-1}\ell
b_{\ell,j}(M).
\end{align}
Sheafifying the complex $0 \to \image \partial_2 \to F_1 \to F_0$, we
get the locally free resolution 
\[
0 \to 
\bigoplus_{\ell=0}^{d-1} \bigoplus_{j \in \ints}
\strSh(-jd+\ell)^{b_{\ell,j}(M)} \to 
\bigoplus_{j \in \ints}\strSh(-jd)^{\beta_{1,j}^B(M)} \to 
\bigoplus_{j \in \ints}\strSh(-jd)^{\beta_{0,j}^B(M)} 
\]
of $\widetilde{M} = 0$ over $\projective^1$. Applying $\Gamma_*$ to this
complex, we get the complex
\[
0 \to 
\bigoplus_{\ell=0}^{d-1} \bigoplus_{j \in \ints}
S(-jd+\ell)^{b_{\ell,j}(M)} \to 
\bigoplus_{j \in \ints}S(-jd)^{\beta_{1,j}^B(M)} \to 
\bigoplus_{j \in \ints}S(-jd)^{\beta_{0,j}^B(M)},
\]
which is acyclic by \cite[Lemma 20.11]{eisenbud}, and hence a resolution of
an $S$-module, which we denote by $M'$. This resolution is minimal, and
$M'$ is a finite length module. It follows that
\begin{equation}
\label{equation:bettiNosOverS}
\beta_{i,j}^S(M') = 
\begin{cases}
\beta_{i, j/d}^B(M),  & \text{if}\; i\in \{0,1\}
\;\text{and}\; d \mid j, \\
b_{d\lceil j/d\rceil-j, \lceil j/d \rceil}(M), & 
\text{if}\; i=2, \\
0, & \text{otherwise}.
\end{cases}
\end{equation}
Note, parenthetically, that the association $M \mapsto M'$ is functorial.

Since $M^{(\ell)}$ is minimally generated as a $B$-module by $\ell+1$ elements,
we get relations
\begin{equation} \label{eqn:Srelation1}
\begin{split}
\beta_{2,j}^B(M) & = 
\sum_{\ell=0}^{d-1} (\ell+1)b_{\ell,j}(M) = 
\sum_{\ell=0}^{d-1} (\ell+1)\beta^S_{2,jd-\ell}(M'), \;\text{and},\\
\beta_{3,j+1}^B(M) & = 
d\sum_{\ell=0}^{d-1} \ell b_{\ell,j}(M) = 
d\sum_{\ell=0}^{d-1} \ell\beta^S_{2,jd-\ell}(M').
\end{split}
\end{equation} From these, we obtain another relation
\begin{align} \label{eqn:Srelation2}
d \beta_{2,j}^B(M) - \beta_{3,j+1}^B(M) & = 
d\sum_{\ell=0}^{d-1} \beta^S_{2,jd-\ell}(M').
\end{align}

We want to say that the correspondence $M \mapsto M'$
descends to a combinatorial map on Betti tables $\beta^B(M) \mapsto
\beta^S(M')$. Unfortunately, $\beta^B(M)$ does not uniquely determine $\beta^S(M')$ as Example~\ref{eg:crude} shows (one needs the finer invariants $b_{\ell,j}(M)$), so such a map does not exist. 

\begin{example} \label{eg:crude}
Consider the case $d=5$ and the degree sequences $(0,5,i)$ for $i=6,\dots,10$ over the polynomial ring $S = \Bbbk[x,y]$. The respective pure Betti diagrams are
\footnotesize
\begin{multicols}{5}
\begin{Verbatim}[samepage=true]
       0 1 2
total: 1 6 5
    0: 1 . .
    1: . . .
    2: . . .
    3: . . .
    4: . 6 5





\end{Verbatim}
\begin{Verbatim}[samepage=true]
       0 1 2
total: 2 7 5
    0: 2 . .
    1: . . .
    2: . . .
    3: . . .
    4: . 7 .
    5: . . 5




\end{Verbatim}
\begin{Verbatim}[samepage=true]
       0 1 2
total: 3 8 5
    0: 3 . .
    1: . . .
    2: . . .
    3: . . .
    4: . 8 .
    5: . . .
    6: . . 5



\end{Verbatim}
\begin{Verbatim}[samepage=true]
       0 1 2
total: 4 9 5
    0: 4 . .
    1: . . .
    2: . . .
    3: . . .
    4: . 9 .
    5: . . .
    6: . . .
    7: . . 5


\end{Verbatim}
\begin{Verbatim}[samepage=true]
       0 1 2
total: 1 2 1
    0: 1 . .
    1: . . .
    2: . . .
    3: . . .
    4: . 2 .
    5: . . .
    6: . . .
    7: . . .
    8: . . 1
\end{Verbatim}
\end{multicols}
\normalsize
Pick rational numbers $c_1, \dots, c_5$. Then there is some integer $D > 0$ so that the weighted sum of these Betti diagrams with coefficients $Dc_i$ is the Betti table of some finite length $S$-module $N$. We will see in the proof of Lemma~\ref{lem:bsRNC} that $N = M'$ for some $B$-module $M$. The data $(\beta^B_{i,j}(M))_{i=0,1,2,3}$ only contains $4$ numbers which we can express as linear combinations of the $c_i$:
\begin{align*}
\beta^B_{0,0}(M) &= c_1 + 2c_2 + 3c_3 + 4c_4 + c_5\\
\beta^B_{1,1}(M) &= 6c_1 + 7c_2 + 8c_3 + 9c_4 + 2c_5\\
\beta^B_{2,2}(M) &= 5 \cdot 5 c_1 + 4 \cdot 5 c_2 + 3 \cdot 5c_3 + 2 \cdot 5 c_4 + c_5\\
\beta^B_{3,3}(M) &= 5(4 \cdot 5 c_1 + 3 \cdot 5c_2 + 2 \cdot 5 c_3 + 5c_4).
\end{align*}
In particular, for any such data, there are infinitely many $5$-tuples $(c_1, \dots, c_5)$ which give rise to this data, so $(c_1, \dots, c_5)$ cannot be recovered from $\beta^B_{i,j}(M)$ (even up to scalar multiple). \qed
\end{example}

There is an easy solution though: we can define an equivalence relation on
$\BettiCone(S)$ to account for the fact that the sums on the right hand
sides of \eqref{eqn:Srelation1} and \eqref{eqn:Srelation2} are uniquely
determined by $\beta^B(M)$. Then $\beta^S(M')$, under this equivalence
relation, is well-defined since the equivalence relation captures all
possible choices for the $b_{\ell,j}(M)$. We record this discussion now.

\begin{notation}
\label{notation:defnphi}
Define an equivalence relation on $\bbV_S$ and $\BettiCone(S)$ by $\gamma \sim \gamma'$ if
\[
\sum_{\ell=0}^{d-1} \gamma_{2,jd-\ell} = \sum_{\ell=0}^{d-1}
\gamma'_{2,jd-\ell} \text{ and }
\sum_{\ell=0}^{d-1} \ell\gamma_{2,jd-\ell} = \sum_{\ell=0}^{d-1}
\ell\gamma'_{2,jd-\ell} \text{ for all $j$}.
\]
Write ${\BettiCone(S)/\!\sim}$ for the set of equivalence classes under this relation. Let $\phi \colon \bbV_B \to {\bbV_S/\!\sim}$ be the following map: for $\beta \in \bbV_B$, define $\phi(\beta)$ to be the class of any $\gamma \in \bbV_S$ where $\gamma$ 
is such that
\begin{enumerate}
\item \label{enum:defnphiZeroOneNonzero}
$\gamma_{i,j} = \beta_{i,j/d}$ if $i\in \{0,1\}$ and 
$d \mid j$.
\item \label{enum:defnphiTwo}
$\sum_{\ell=0}^{d-1} (\ell+1)\gamma_{2,jd-\ell} = \beta_{2,j}$ 
and $\sum_{\ell=0}^{d-1} \ell\gamma_{2,jd-\ell} = \frac{1}{d}\beta_{3,j+1}$ 
for all $j$.
\item \label{enum:defnphiZeroOnezero}
$\gamma_{i,j} = 0$ if $i\in \{0,1\}$ and $d \nmid j$ or if $i \geq
3$. \hfill \qed
\end{enumerate}
\end{notation}

\begin{lemma} \label{lem:bsRNC}
\begin{enumerate}[\rm (a)]
\item $\phi(\beta^B(M)) \sim \beta^S(M')$.  
\item $\phi(\beta + \beta') \sim \phi(\beta) + \phi(\beta')$.
\item If $\gamma \sim \gamma'$ and $\delta \sim \delta'$, then $\gamma+\delta \sim \gamma'+\delta'$.
\item $\phi(\BettiCone(B)) \subseteq \BettiCone(S)/\sim$.
\item The restriction of $\phi$ to $\BettiCone(B)$ is injective, and its image is generated by the classes of the Betti tables over $S$ of degree sequences of the form $(da_0 < da_1 < a_2)$ where $a_2 \equiv 0, 1 \pmod d$.
\end{enumerate}
\end{lemma}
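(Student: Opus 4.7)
The plan is as follows. Parts (a), (b), and (c) are definitional. For (a), compare~\eqref{equation:bettiNosOverS} with the three conditions of Notation~\ref{notation:defnphi}: rows $i = 0, 1$ agree directly, the two sums in the second condition of the notation are exactly the identities~\eqref{eqn:Srelation1}, and $\beta^S_{i, j}(M') = 0$ for $i \geq 3$ because $M'$ is finite length over the regular ring $S$. Parts (b) and (c) follow from linearity: if $\gamma, \gamma'$ represent $\phi(\beta), \phi(\beta')$ then $\gamma + \gamma'$ satisfies the defining conditions for $\phi(\beta + \beta')$, and $\sim$ is visibly an additive congruence.

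For (d), write any $\beta \in \BettiCone(B)$ as $\sum c_k \beta^B(M_k)$ with $c_k \in \rationals_{\geq 0}$ and $M_k$ finite length. By (b) and (a), $\phi(\beta) \sim \sum c_k [\beta^S(M_k')] \in \BettiCone(S)/\!\sim$.

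Part (e) carries the content. For \emph{injectivity}, suppose $\beta$ lies in the subcone and $\phi(\beta) \sim 0$. The entries $\beta_{0, j}$ and $\beta_{1, j}$ are determined by any representative via the first condition of Notation~\ref{notation:defnphi}, and thus vanish. The two $\sim$-invariant sums of the representative then yield $\beta_{2, j} = \sum_\ell \gamma_{2, jd-\ell} + \sum_\ell \ell \gamma_{2, jd-\ell}$ and $\beta_{3, j+1} = d \sum_\ell \ell \gamma_{2, jd-\ell}$, both zero. For $i \geq 4$, the structure of the resolution past the second stage (Discussion~\ref{discussionbox:mcmRNC} applied to~\eqref{eqn:imagepartial3}) gives $\beta^B_{i, k}(M) = (d-1)^{i-3} \beta^B_{3, k - i + 3}(M)$ for every finite length $M$, and this linear relation extends to the subcone. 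Hence $\beta = 0$.

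For the \emph{image description}, first compute $\phi$ on the pure $B$-tables. Using Proposition~\ref{proposition:pure} together with the Herzog--K\"uhl formula for pure resolutions over $S = \Bbbk[x,y]$, one checks that $\phi(\beta^B(d_0, d_1, d_2; \ell))$ is represented by the pure $S$-Betti table of degree sequence $(dd_0, dd_1, dd_2 - \ell)$. Feeding the redundancy~\eqref{eqn:pureredundancy} through $\phi$ then gives, for each $\ell \in \{1, \ldots, d - 2\}$, a $\sim$-relation expressing the class of the pure $S$-table of degree sequence $(dd_0, dd_1, dd_2 - \ell)$ as a positive combination of the classes of the pure $S$-tables of degree sequences $(dd_0, dd_1, dd_2)$ (with $a_2 \equiv 0 \pmod d$) and $(dd_0, dd_1, dd_2 - d + 1)$ (with $a_2 \equiv 1 \pmod d$). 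Finally, given $\beta^S(M')$ in the image, apply Eisenbud--Schreyer~\cite{es:bs} to decompose it as a positive combination of pure $S$-tables with degree sequences $(e_0^{(k)}, e_1^{(k)}, e_2^{(k)})$; since $\beta^S(M')$ is supported in rows $0$ and $1$ only on $d\ints$, each $e_0^{(k)}, e_1^{(k)}$ lies in $d\ints$, and the $\sim$-relation above reduces each $e_2^{(k)}$ modulo $d$ to $0$ or $1$. The main obstacle is the explicit $\phi$-computation on pure $B$-tables together with the support analysis of the Eisenbud--Schreyer decomposition.
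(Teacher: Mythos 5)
Parts (a)--(d) and the injectivity half of (e) follow the paper. Your injectivity argument is phrased as a kernel computation ($\phi(\beta)\sim 0\Rightarrow \beta=0$), which by itself does not give injectivity on a cone (differences of cone elements need not lie in the cone), but the substance is there and is the same as the paper's: the $\sim$-invariants recover $\beta_{i,j}$ for $i\le 3$, and the linear relation $\beta_{i,k}=(d-1)^{i-3}\beta_{3,k-i+3}$, valid on the linear span of the subcone because it holds on each generator $\beta^B(M)$, propagates this to all columns. A one-line rephrasing (``$\phi(\beta)\sim\phi(\beta')$ determines the same data, so $\beta=\beta'$'') fixes the presentation.

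There is, however, a genuine gap in your treatment of the image. You compute (correctly) that $\phi(\beta^B(d_0,d_1,d_2;\ell))$ is represented by the pure $S$-table of degree sequence $(dd_0,dd_1,dd_2-\ell)$, and you handle the inclusion $\image\phi\subseteq$ (cone of those classes) via the support observation on rows $0,1$ together with the Eisenbud--Schreyer decomposition. But you never establish the reverse inclusion: to conclude that the class of a pure $S$-table with degree sequence $(da_0,da_1,a_2)$ actually lies in $\phi(\BettiCone(B))$, you need the corresponding pure $B$-table $\beta^B(a_0,a_1,\lceil a_2/d\rceil;\,d\lceil a_2/d\rceil-a_2)$ to lie in $\BettiCone(B)$, i.e.\ to be realized (up to scalar) by a finite length $B$-module. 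Computing $\phi$ on a formal vector in $\bbV_B$ says nothing about whether that vector is in the cone. The paper supplies precisely this realizability via the construction $M=\bigoplus_{n\in\ints}N_{dn}$, where $N$ is a finite length $S$-module with a pure resolution of degree sequence $(da_0,da_1,a_2)$ (which exists by Eisenbud--Schreyer); restricting the $S$-resolution of $N$ to degrees in $d\ints$ exhibits a pure $B$-resolution of $M$ and shows $M'=N$. This existence statement is not a technicality: the proof of Theorem~\ref{thm:betticoneB} explicitly cites Lemma~\ref{lem:bsRNC} as ``also show[ing] that there exist finite length modules which have these Betti tables.'' Your proposal needs this construction (or an equivalent existence argument) added to be complete.
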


\begin{proof}
Properties (a), (b), and (c) follow directly from the definition of $\sim$. Since $\BettiCone(B)$ is additively generated by elements of the form $\beta^B(M)$, (d) follows from (a), (b), and (c).

Let $\beta, \beta' \in \BettiCone(B)$. Set $\gamma = \phi(\beta)$, $\gamma'
= \phi(\beta')$.
If $\gamma \sim \gamma'$, then $\beta_{i,j} = \beta'_{i,j}$ 
for all $0 \leq i \leq 3$ and for all $j$. To show that $\phi$ is
injective, we need that for any graded $B$-module $M$, 
$\left(\beta^B_{i,j}(M)\right)_{\substack{0 \leq i\leq3, \\ j \in\ints}}$ 
determines $\beta^B(M)$. Even stronger, by \eqref{eqn:imagepartial3} and \eqref{eqn:Srelation1}, these invariants determine $\image \partial_3$:
\[
\image \partial_3 \cong \bigoplus_{j \in \ints} (M^{(d-1)}(-j))^{\beta^B_{3,j}(M)/d}.
\]

Now we describe the image of $\phi$. Let $a_0, a_1, a_2$ be
integers such that $da_0 < da_1 < a_2$. Let $N$ be a finite length
graded $S$-module with pure resolution with degree sequence
$(da_0 < da_1 < a_2)$. Let $M = \bigoplus_{n \in \ints} N_{dn}$. Then $M$ is a
finite length graded $B$-module. Take a minimal
$S$-free resolution 
\[
0 \to S(-a_2)^{\beta_{2,a_2}^S(N)} \to 
S(-da_1)^{\beta_{1,da_1}^S(N)} \to S(-da_0)^{\beta_{0,da_0}^S(N)}
\]
of $N$. Restricting this complex to degrees $nd$ for $n \in \ints$, we see that
\begin{align*}
b_{\ell,j}(M) &= 
\begin{cases}
\beta_{2,a_2}^S(N), & \text{if}\; jd-\ell = a_2 \;\text{with}\; 0 \leq \ell \leq d-1, \\
0, & \text{otherwise}.
\end{cases}
\end{align*}
and that, for $i=0,1$, $\beta_{i,j}^B(M)  = \beta_{i, jd}^S(N)$.  Note that
$N = M'$, so the class of $\beta^S(N)$ is in $\image \phi$. The converse
inclusion, that $\image \phi$ is inside the cone generated by the classes
of the Betti tables over $S$ of degree sequences of the form
$(da_0 < da_1 < a_2)$ follows from noting that for all $B$-modules $M$,
$\beta^S(M')$ has a decomposition into pure Betti tables of this form
\cite[\S 1]{es:bs}.

We may further impose that $a_2 \equiv 0 \pmod d$ or $a_2 \equiv 1 \pmod d$ if we just want generators for the cone. This follows from what we have just shown, additivity of $\phi$, and the relation \eqref{eqn:pureredundancy}.
\end{proof}

\begin{proof}[Proof of Theorem~\ref{thm:betticoneB}]
Lemma~\ref{lem:bsRNC} shows that the subcone of $\BettiCone(B)$ generated
by Betti tables of finite length $B$-modules is already generated by pure
Betti tables of type $(d_0, d_1, d_2; \ell)$ where $d_0<d_1<d_2$ and $\ell
\in \{0,d-1\}$, and also shows that there exist finite length modules which
have these Betti tables. To show that these are extremal rays of this
subcone, we have to show that no such pure Betti table is a nonnegative
linear combination of the other ones. We know that in $\BettiCone(S)$, the pure Betti tables for different degree sequences have this property. Hence we reduce to fixing $d_0, d_1, d_2$ and showing there are no dependencies as we vary $\ell$. But we only allow $\ell=0$ and $\ell=d-1$, and it is clear that the images of their Betti tables under $\phi$ are not scalar multiples of each other. 
\end{proof}

\begin{remarkbox} \label{rmk:simplicial}
By Theorem~\ref{thm:betticoneB}, the extremal rays of $\BettiCone(B)$ are of the form $(d_0, d_1, d_2; \ell)$ where $\ell=0$ or $\ell=d-1$. The proof also gives a natural correspondence between these extremal rays and a subset of the extremal rays of $\BettiCone(S)$ via 
\[
(d_0,d_1,d_2;0) \leftrightarrow (dd_0, dd_1, dd_2), \qquad (d_0,d_1,d_2;d-1) \leftrightarrow (dd_0,dd_1, dd_2-(d-1)).
\]
The extremal rays in $\BettiCone(S)$ have a partial order structure by pointwise comparison, i.e., $(e_0,e_1,e_2) \le (e'_0,e'_1,e'_2)$ if and only if $e_i \le e'_i$ for $i=0,1,2$. We can transfer this partial order structure to the extremal rays of $\BettiCone(B)$ which gives $(d_0,d_1,d_2;\ell) \le (d'_0, d'_1, d'_2;\ell')$ if and only if $d_0 \le d'_0$, $d_1 \le d'_1$ and $dd_2 - \ell \le dd'_2 - \ell'$. 

We can define a simplicial structure on $\BettiCone(B)$ by defining a simplex to be the convex hull of any set of extremal rays that form a chain in this partial order. Then any two simplices intersect in a common simplex since the same property is true in $\BettiCone(S)$ \cite[Proposition 2.9]{boijsoderberg}. Furthermore, every point $\beta \in \BettiCone(B)$ lies in one of these simplices: from the proof of Lemma~\ref{lem:bsRNC}, we see that $\phi(\beta)$ is a positive linear combination of pure Betti tables corresponding to a chain $\{(da^{(i)}_0, da^{(i)}_1, da^{(i)}_2 - \ell^{(i)})\}$, and using \eqref{eqn:pureredundancy}, we can also assume that it is a chain where $\ell^{(i)} \in \{0,d-1\}$ for all $i$. This allows us to use a greedy algorithm as in \cite[\S 1]{es:bs} to decompose elements of $\BettiCone(B)$ as a positive linear combination of pure diagrams.
\end{remarkbox}

\begin{remarkbox}
\label{remarkbox:depthOne}
We can modify Theorem~\ref{thm:betticoneB} to describe the cone of
Cohen--Macaulay $B$-modules of a fixed depth. We have just described the
depth 0 case, and the depth 2 case corresponds to maximal Cohen--Macaulay
modules, which are easily classified
(Discussion~\ref{discussionbox:mcmRNC}), so the only interesting case
remaining is depth 1. In this case, one sheafifies the complex $0 \to \image
\partial_1 \to F_0$ and the resulting module $M'$ is Cohen--Macaulay of
depth 1 (it has a length 1 resolution, and its Hilbert polynomial is the
same as the Hilbert polynomial of $M$, and hence has dimension 1). The
equivalence relation $\sim$ on $\BettiCone(S)$ needs to be changed, but the
required changes are straightforward. The end result is that we can define
depth 1 Cohen--Macaulay modules with pure resolutions (their type is of the
form $(d_0, d_1; \ell)$) and the analogue of Theorem~\ref{thm:betticoneB}
holds.
\end{remarkbox}

\section{An example} \label{sec:example}

We give a few explicit examples for $d=3$. In this case, $B$ is the homogeneous coordinate ring of the rational normal cubic. We will use {\tt Macaulay2} \cite{M2} and the package {\tt BoijSoederberg}.

We wish to construct a finite length $B$-module with pure resolution of type $(d_0, d_1, d_2; \ell)$ where $0 \le \ell \le 2$. Consider the case $(0, 2, 3; 1)$. Let $N$ be a finite length module over $S = \Bbbk[x,y]$ with pure resolution of degree sequence $0 < 6 < 8$, for example we can take $N$ to be the quotient by the ideal of $4$ random sextics. In any case we have $N = \bigoplus_{i=0}^6 N_i$ and we set $M = N_0 \oplus N_3 \oplus N_6$, which is a $B$-module. If we consider the free resolution $0 \to S(-8)^3 \to S(-6)^4 \to S$ for $N$ and throw out all graded pieces whose degree is not divisible by $3$ (and then divide all remaining degrees by $3$), then we get the exact sequence
\[
0 \to M^{(1)}(-3)^3 \to B(-2)^4 \to B \to M \to 0.
\]

We now give an example of decomposing the Betti table of a $B$-module $M$. 
Set $a = x^3$, $b = x^2y$, $c = xy^2$, $d=y^3$ so that we can identify $B$ as the polynomial ring in $a,b,c,d$ modulo the $2 \times 2$ minors of $\begin{pmatrix} a & b & c \\ b & c & d\end{pmatrix}$. Consider the $B$-module $M = B/I$ where $I$ is the ideal $(a+c, d^2, cd)$. The Betti table of $M$ over $B$ is
\footnotesize 
\begin{Verbatim}[samepage=true]
       0 1 2 3  4  5 
total: 1 3 5 9 18 36 
    0: 1 1 . .  .  . 
    1: . 2 5 9 18 36 ...
\end{Verbatim} 
\normalsize
and we wish to decompose it as a nonnegative sum of pure diagrams.
Define an $S$-module $M'$ by using the same presentation matrix. Then $M' = S/J$ where $J$ is the ideal $(x^3+xy^2, y^6, xy^5)$. Its Betti table and its decomposition into a nonnegative sum of pure Betti tables is:
\footnotesize
\begin{Verbatim}[samepage=true]
       0 1 2    1 /       0 1 2\     2 /       0 1 2\    1 /       0 1 2\
total: 1 3 2   (-)|total: 4 7 3| + (--)|total: 1 7 6| + (-)|total: 1 4 3| 
    0: 1 . .    7 |    0: 4 . .|    21 |    0: 1 . .|    3 |    0: 1 . .|
    1: . . .      |    1: . . .|       |    1: . . .|      |    1: . . .|
    2: . 1 . =    |    2: . 7 .|       |    2: . . .|      |    2: . . .|
    3: . . .      |    3: . . .|       |    3: . . .|      |    3: . . .|    
    4: . . .      |    4: . . .|       |    4: . . .|      |    4: . . .|
    5: . 2 1      \    5: . . 3/       \    5: . 7 6/      |    5: . 4 .|
    6: . . 1                                               \    6: . . 3/
\end{Verbatim} 
\normalsize
These $3$ pure diagrams translate to the exact sequences
\small \begin{align*}
0 \to M^{(2)}(-3)^3 \to B(-1)^7 \to B^4, \quad 0 \to M^{(2)}(-3)^6 \to B(-2)^7 \to B, \quad 0 \to M^{(1)}(-3)^6 \to B(-2)^4 \to B,
\end{align*}
\normalsize
and hence we get the sum of pure diagrams
\footnotesize
\begin{Verbatim}[samepage=true]
 1 /       0 1 2  3  4  5    \     2 /       0 1  2  3  4   5    \    1 /       0 1 2 3  4  5    \
(-)|total: 4 7 9 18 36 72    | + (--)|total: 1 7 18 36 72 144    | + (-)|total: 1 4 6 9 18 36    |
 7 |    0: 4 7 .  .  .  .    |    21 |    0: 1 .  .  .  .   .    |    3 |    0: 1 . . .  .  .    |
   \    1: . . 9 18 36 72 .../       \    1: . 7 18 36 72 144 .../      \    1: . 4 6 9 18 36 .../ 
\end{Verbatim} 
\normalsize

Alternatively, we can use Remark~\ref{rmk:simplicial} to get a decomposition of $\beta^B(M)$ without understanding $\beta^S(M)$. Then the greedy algorithm in \cite[\S 1]{es:bs} tells us to subtract the largest positive multiple of the pure diagram of type $(0,1,3;2)$ that leaves a nonnegative table. By Proposition~\ref{proposition:pure}, this has Betti table
\footnotesize 
\begin{Verbatim}[samepage=true]
       0 1 2  3  4  5 
total: 4 7 9 18 36 72
    0: 4 7 .  .  .  . 
    1: . . 9 18 36 72 ...
\end{Verbatim}
\normalsize
So the largest multiple we can subtract is $1/7$, which leaves us with
\footnotesize 
\begin{Verbatim}[samepage=true]
  1 /       0  1  2  3  4   5    \
(--)|total: 4 14 26 45 90 180    |
  7 |    0: 3  .  .  .  .   .    |
    \    1: . 14 26 45 90 180 .../
\end{Verbatim}
\normalsize
Now we repeat by subtracting the largest possible multiple of the pure diagram of type $(0,2,3;2)$ that leaves a nonnegative table. When we do this, the result is another pure diagram. The final decomposition is
\footnotesize
\begin{Verbatim}[samepage=true]
 1 /       0 1 2  3  4  5    \     5 /       0 1  2  3  4   5    \    1 /       0 1 2 \
(-)|total: 4 7 9 18 36 72    | + (--)|total: 1 7 18 36 72 144    | + (-)|total: 1 3 2 |
 7 |    0: 4 7 .  .  .  .    |    28 |    0: 1 .  .  .  .   .    |    4 |    0: 1 . . |
   \    1: . . 9 18 36 72 .../       \    1: . 7 18 36 72 144 .../      \    1: . 3 2 / 
\end{Verbatim} 
\normalsize

Using \eqref{eqn:pureredundancy}, this pure diagram decomposition of $\beta(M)$ is equivalent to the previous one.

\section{Questions}

\begin{enumerate}[1.]
\item \label{question:allfgmodules}
Unfortunately, our techniques do not allow us to describe the cone
$\BettiCone(B)$ of all finitely generated $B$-modules (i.e., allowing those
that are not Cohen--Macaulay). Given the situation for polynomial rings
\cite{bs2}, we might conjecture that $\BettiCone(B)$ is the sum (over
$c=0,1,2$) of the cones of Betti tables for Cohen--Macaulay $B$-modules of
codimension $c$. Is this correct?

\item For the polynomial ring, the inequalities that define the facets of
its cone of Betti tables has an interpretation in terms of cohomology
tables of vector bundles on projective space \cite[\S 4]{es:bs}. Are there
interpretations for the inequalities that define the cone of finite length
$B$-modules?
\end{enumerate}

\begin{remarkbox}
\label{remarkbox:totalBetti}
With reference to Question~\ref{question:allfgmodules}, let us look at the
cone $\BettiCone_{\mathrm{tot}}(B)$ 
generated by the total Betti numbers $(b_0(M), b_1(M), b_2(M), b_3(M))
\in \rationals^4$ of finitely generated graded $B$-modules $M$. Consider an
exact sequence 
\[
0 \to E_3 \to F_2 \to F_1 \to F_0 \to M \to 0
\]
such that $F_0$, $F_1$, and $F_2$ are free, $E_3$ is a direct sum of copies
of $M^{(d-1)}$ and $\image(F_{i+1} \to F_i) \subseteq
\frakm F_i$ for $i=0,1$. (See Discussion~\ref{discussionbox:mcmRNC}.)
Note that for $i=0,1,2$, $b_i(M) = \rank F_i$ and that 
$b_3(M) = d \rank(E_3)$. By considering the partial Euler
characteristics of the above exact sequence, we get four inequalities:
\begin{align*}
b_3(M) \geq 0,\  b_2(M) \ge \frac{b_3(M)}{d-1}, \  b_1(M) \ge b_2(M) - \frac{b_3(M)}{d}, \ b_0(M) \ge b_1(M) - b_2(M) + \frac{b_3(M)}{d}.
\end{align*}
To prove the second inequality, we have an exact sequence $0 \to E_3 \to F_2 \to N \to 0$ where $N$ is a maximal Cohen--Macaulay module, and so $\rank(E_3) \le (d-1) \rank(N)$. Consider the set
\[
\{(b_0, b_1, b_2, b_3) \in \rationals^4 : b_3 \geq 0,\ 
b_2 - \frac{b_3}{d-1} \geq 0,\
b_1 - b_2 + \frac{b_3}{d} \geq 0,\ 
b_0 - b_1 + b_2 - \frac{b_3}{d} \ge 0\}.
\]
This is a convex polyhedral 
cone, with extremal rays generated by $(1,0,0,0)$, $(1,1,0,0)$, $(0,1,1,0)$ and $(0,1,d,d(d-1))$.
We claim that this is the closure of
$\BettiCone_{\mathrm{tot}}(B)$; of course, the rays generated by $(0,1,1,0)$ and $(0,1,d,d(d-1))$ do not belong to $\BettiCone_{\mathrm{tot}}(B)$. This picture, and the proof below, are analogous to the case of regular local
rings~\cite[\S 2]{BEKS}. The point $(1,0,0,0)$ comes from a free module
of rank one, while $(1,1,0,0)$ comes from $M = B/(f)$ for some non-zero $f
\in B$. 

Consider the modules $M_t$, $t \ge 1$ with pure resolutions of type $(0, t, t+1; 0)$. By Proposition~\ref{proposition:pure}, $(b_0(M_t), b_1(M_t), b_2(M_t), b_3(M_t))$ is a multiple of $(1, t+1, t, 0)$, which limits to the ray $(0,1,1,0)$ as $t \to \infty$. Now consider modules $N_t$, $t \geq 1$ with pure resolutions of type
$(0, td, td+1; d-1)$. By Proposition~\ref{proposition:pure}, 
$(b_0(N_t), b_1(N_t), b_2(N_t), b_3(N_t))$ is a multiple of $(1, td^2+1, td^3, td^3(d-1))$, which, as $t \to \infty$, approaches the ray generated by $(0,1,d,d(d-1))$.
\end{remarkbox}

\begin{remarkbox}
One might wonder whether a similar argument works for 
the Veronese embedding $(\projective^2, \strSh_{\projective^2}(2))$, whose homogeneous coordinate
ring is the only other Veronese subring with finite Cohen--Macaulay
representation type. There are significant obstacles to overcome, which we outline. In
\S\ref{section:mainresult}, we took the sheafification of a
resolution $0 \to \image \partial_2 \to F_1 \to F_0$ of the finite length
$B$-module $M$ by \emph{maximal Cohen--Macaulay} $B$-modules and,
thereafter, applied $\Gamma_*$ to obtain a minimal $S$-free resolution of
the finite length $S$-module $M'$; the key point is that for a 
maximal Cohen--Macaulay $B$-module $N$, $\Gamma_*(\widetilde{N})$ is a
maximal Cohen--Macaulay (hence free) $S$-module. This is not true for the
Veronese embedding $(\projective^2, \strSh_{\projective^2}(2))$. 

More specifically, set $S = \Bbbk[x,y,z]$ and $B = \bigoplus_n S_{2n}$. Then, up to twists, $B$ has three non-isomorphic maximal Cohen--Macaulay modules 
$M^{(0)} \simeq B$, the canonical module $M^{(1)}$ and the syzygy module
$M^{(3)}$ of $M^{(1)}$ (see the proof of \cite[Proposition 16.10]{yoshino}). The first syzygy of $M^{(\ell)}$ is $(M^{(3)})^{\oplus \ell}$, for $\ell = 0, 1, 3$. However, $\Gamma_*(\widetilde{M^{(3)}})$ is not maximal 
Cohen--Macaulay over $S$; its depth is two. To see this, note that the
exact sequence $0 \to M^{(3)} \to B^3 \to M^{(1)} \to 0$ gives the Euler
sequence $0 \to \Omega_{\projective^2}^1(1) \to 
\strSh_{\projective^2}^3 \to \strSh_{\projective^2}(1) \to 0$
on $\projective^2$; it follows that 
$\Gamma_*(\widetilde{M^{(3)}})$ is the second syzygy of $\Bbbk(1)$ as an
$S$-module and has depth two. From this it follows that if we begin with a
$B$-free resolution $(F_\bullet, \partial_\bullet)$ of a $B$-module of
finite length and apply $\Gamma_*$ to the sheafification of
$0 \to \image \partial_3 \to F_2 \to F_1 \to F_0$, the ensuing
complex of $S$-modules need not consist of free $S$-modules.
\end{remarkbox}

\end{document}